\newtheorem{theorem}{Theorem}
\newtheorem{lemma}[theorem]{Lemma}
\newenvironment{proof}[1][Proof]{\noindent\textbf{#1.} }{{\hfill $\Box$ \\}}
\begin{document}

\title{On Zel'manov's global nilpotence theorem for Engel Lie algebras}
\author{Michael Vaughan-Lee}
\date{July 2025}
\maketitle

\begin{abstract}
I give a proof of Zel'manov's theorem that if $L$ is an $n$-Engel Lie algebra
over a field $F$ of characteristic zero, then $L$ is (globally) nilpotent.
This is a very important result which extends Kostrikin's theorem that $L$ is
locally nilpotent if the characteristic of $F$ is zero or some prime $p>n$.
Zel'manov's proof contains some striking original ideas, and I wrote this
note in an effort to understand his arguments. I hope that my efforts will
be of use to other mathematicians in understanding this remarkable theorem.
I am grateful to Christian d'Elb\'{e}e for a number of helpful comments on
earlier drafts of this note.

\end{abstract}

\section{Introduction}

Efim Zel'manov \cite{zelmanov87} proved that if $L$ is an $n$-Engel Lie
algebra over a field $F$ of characteristic zero, then $L$ is (globally)
nilpotent. This result extends Kostrikin's theorem that if $F$ has
characteristic zero or prime characteristic $p>n$, then $L$ is locally
nilpotent (see \cite{kostrikin59}, \cite{kostrikin86}). Kostrikin gives an
expanded version of Zel'manov's proof in his book \emph{Around Burnside
}\cite{kostrikin86}, but I feel there is room for another version of the
proof of this remarkable result. Accordingly I present here a proof of
Zel'manov's theorem which actually closely follows his original proof, though
it also draws on Kostrikin's presentation of the proof in \cite{kostrikin86}.

However my proof does differ from the proofs in \cite{zelmanov87} and
\cite{kostrikin86} in one key point. The starting point for Zel'manov's proof
is Kostrikin's theorem (\cite{kostrikin59}, \cite{kostrikin86}) that every
(non-zero) $n$-Engel Lie algebra over a field $F$ of characteristic zero or
prime characteristic $p>n$ contains a non-zero abelian ideal. Zel'manov uses
this to show that every $n$-Engel Lie algebra of characteristic zero is the
union of an ascending chain of ideals defined as follows. He sets
$I_{0}=\{0\}$, and proceeds by transfinite induction. If $\alpha$ is a limit
ordinal he sets $I_{\alpha}=\cup_{\nu<\alpha}I_{\nu}$, and if $\alpha$ is a
successor ordinal and $I_{\alpha-1}$ is defined he sets $I_{\alpha}$ to be the
inverse image in $L$ of the sum of all abelian ideals in $L/I_{\alpha-1}$.
However Kostrikin's proof that every non-zero $n$-Engel Lie algebra over a
field $F$ of characteristic zero or prime characteristic $p>n$ contains a
non-zero abelian ideal actually shows that $L$ has a finite chain of ideals%
\[
L=I_{0}>I_{1}>\ldots>I_{k-1}>I_{k}=\{0\}
\]
with the property that $I_{j}/I_{j+1}$ is the sum of all abelian ideals of
$L/I_{j+1}$ for $j=0,1,\ldots,k-1$. Adjan and Razborov \cite{adjanraz87} show
that the length $k$ of this chain can be bounded by $N(n,10).6^{n+12}$ where
$N(n,r)$ is defined by%
\[
N(n,4)=6,\;N(n,r+1)=F(n,r+1,N^{2}(n,r).3^{(n+6)/2}),
\]
and where $F$ is defined by
\[
F(n,r,0)=1,\;F(n,r,i+1)=n.r^{3F(n,r,i)}.
\]

So we let $L$ be the free $n$-Engel Lie algebra of countably infinite rank
over a field $F$ of characteristic zero, and we let%
\[
L=I_{0}>I_{1}>\ldots>I_{k-1}>I_{k}=\{0\}
\]
be a finite chain of ideals as described above. (The bound on $k$ does not
concern us.)

We prove by reverse induction on $j$ that the ideal $I_{j}$ is fully
invariant. Clearly the ideal $I_{k}$ is fully invariant. So suppose that
$I_{j+1}$ is fully invariant, and consider the ideal $I_{j}/I_{j+1}$ in
$L/I_{j+1}$. Since $I_{j+1}$ is fully invariant, $L/I_{j+1}$ is relatively
free, and any relation which holds in $L/I_{j+1}$ is actually an identical
relation. Let $M=L/I_{j+1}$ and let $I=I_{j}/I_{j+1}$. So $I$ is the sum of
all abelian ideals of the relatively free Lie algebra $M$. We need to show
that $I$ is a fully invariant ideal of $M$. To this end it is sufficient to
show that if $\theta$ is an endomorphism of $M$ and if $a$ lies in an abelian
ideal of $M$ then the ideal of $M$ generated by $a\theta$ is abelian. We let
the free generators of $M$ be $x_{1},x_{2},\ldots$ and we let $a$ lie in the
subring of $M$ generated by $x_{1},x_{2},\ldots,x_{r}$. Since the ideal
generated by $a$ is abelian%
\[
\lbrack a,x_{r+1},x_{r+2},\ldots,x_{r+m},a]=0
\]
is an identical relation in $M$ for all $m>0$. Now let $a_{1},a_{2}%
,\ldots,a_{m}$ be arbitrary elements of $M$ and let $\varphi$ be an
endomorphism of $M$ such that $x_{i}\varphi=x_{i}\theta$ for $i=1,2,\ldots,r$
and such that $x_{r+i}\varphi=a_{i}$ for $i=1,2,\ldots,m$. Then%
\[
\lbrack a\theta,a_{1},a_{2},\ldots,a_{m},a\theta]=[a,x_{r+1},x_{r+2}%
,\ldots,x_{r+m},a]\varphi=0,
\]
which shows that the ideal of $M$ generated by $a\theta$ is abelian, as claimed.

We show by induction on the length of this series that $L$ is nilpotent.
Our base inductive step is to show that $L/I_1$ is nilpotent using the fact
that $L/I_1$ is a relatively free $n$-Engel Lie algebra which is a sum of
abelian ideals. For the general inductive step we suppose that $L/I_j$ is nilpotent
for some $j$ with $1\leq j<k$, and we prove that this implies that $L/I_{j+1}$ is
nilpotent.

Our base inductive step is very easy. Let $M=L/I_1$. Then $M$ is a relatively free 
$n$-Engel Lie algebra over $F$, and $M$ is sum of abelian ideals.
If $x$ is a free generator of $M$ then we can write 
\[
x=a_1+a_2+\ldots +a_{k}
\]
for some $k$, where the elements $a_1,a_2,\ldots ,a_{k}$ all lie in abelian ideals of $M$.
Since $M$ is relatively free, this relation is an identical relation. Let $\theta $ be the
endomorphism of $M$ which maps $x$ to $x$, and maps all other free generators of $M$ to zero.
Then
\[
x=x\theta =a_1\theta +a_2\theta +\ldots +a_{k}\theta.
\]
The elements $a_i\theta $ must all be scalar multiples of $x$, and as we showed above they
must all lie in abelian ideals of $M$. So $x$ lies in an abelian ideal of $M$, which
implies that $M$ satisfies the 2-Engel identity $[y,x,x]=0$. It is well known that the
2-Engel identity in characteristic zero implies the identity $[x,y,z]=0$. (See Example 2.7 from
the introduction to \cite{kostrikin86}, or Theorem 3.1.1 of \cite{vl1993}.) 
So $M$ is nilpotent of class 2.

For the general inductive step we need some of the representation theory of the symmetric group.

\section{Representation theory of the symmetric group}

We let $N$ be a positive integer, and we consider the group ring $\mathbb{Q}%
$Sym$(N)$ of the symmetric group on $N$ letters, where $\mathbb{Q}$ is the
rational field. The identity element in $\mathbb{Q}$Sym$(N)$ is a sum of
primitive idempotents, and these are described in James and Kerber
\cite{jamesker81}: they correspond to \emph{Young tableaux}. For each
partition $(m_{1},m_{2},\ldots,m_{s})$ of $N$ with $m_{1}\geq m_{2}\geq
\ldots\geq m_{s}$ we associate a \emph{Young diagram}, which is an array of
$N$ boxes arranged in $s$ rows, with $m_{i}$ boxes in the $i$-th row. The
boxes are arranged so that the $j$-th column of the array consists of the
$j$-th boxes out of the rows which have length $j$ or more. For example, if
$N=5$ there are seven possible Young diagrams.

\[
\ydiagram{5}\;\;\;\;
\ydiagram{4,1}\;\;\;\;
\ydiagram{3,2}
\]
\bigskip

\[
\ydiagram{3,1,1}\;\;\;\;
\ydiagram{2,2,1}\;\;\;\;
\ydiagram{2,1,1,1}\;\;\;\;
\ydiagram{1,1,1,1,1}
\]
\bigskip

We obtain a Young tableau from a Young diagram by filling in the $N$ boxes
with $1,2,\ldots,N$ in some order. We then let $H$ be the subgroup of Sym$(N)$
which permutes the entries within each row of the tableau, and we let $V$ be
the subgroup of Sym$(N)$ which permutes the entries within each column of the
tableau. We set%
\[
e=\sum_{\pi\in V,\,\rho\in H}\text{sign}(\pi)\pi\rho.
\]
Then $\frac{1}{k}e$ is a primitive idempotent of $\mathbb{Q}$Sym$(N)$ for some
$k$ dividing $N!$. As mentioned above the identity element in $\mathbb{Q}%
$Sym$(N)$ can be written as a sum of primitive idempotents of this form, and
if the field $F$ has characteristic zero then so can the identity element in
$F$Sym$(N)$.

One key property of these Young tableaux is that if we have a Young tableau on
$N$ letters, then either the first row or the first column of the tableau must
have length at least $N^{\frac{1}{2}}$.

\section{A key lemma}

\begin{lemma}
Let $L=L_{0}\oplus L_{1}$ be an $n$-Engel Lie algebra with a $\mathbb{Z}_{2}%
$-grading, and suppose that $L_{0}$ is nilpotent of class at most $m-1$, so
that $[x_{1},x_{2},\ldots,x_{m}]=0$ is an identical relation in $L_{0}$. Then
$L$ is nilpotent of class bounded by $\frac{n^{(n-1)(m-1)+1+m}-1}{n-1}$.
\end{lemma}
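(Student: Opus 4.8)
The plan is to work inside the relatively free $n$-Engel Lie algebra and exploit the $\mathbb{Z}_2$-grading to bound the nilpotency class of $L$ in terms of $n$ and $m$. The key observation is that any product of homogeneous elements of $L$ lands in $L_0$ or $L_1$ according to the parity of the number of $L_1$-factors, and $L_0$ is already nilpotent of class $\le m-1$. So the only way to build long nonzero products is to use many elements of $L_1$. I would first reduce to the case where $L$ is generated by homogeneous elements (indeed, by elements of $L_1$, since $L_0 = [L,L]_0 + \dots$ — more carefully, $L_0$ is generated modulo its own square by brackets of $L_1$-elements, or one can adjoin the $L_0$-generators and track them separately). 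The crux is to control how far a bracket can extend once it already contains a bounded number of $L_1$-entries.

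**The main mechanism.** The hard part will be the combinatorial/inductive estimate that bounds the class. I expect the argument to go roughly as follows: consider a left-normed bracket $[y_1, y_2, \ldots, y_N]$ with each $y_i$ homogeneous. Group the $y_i$ into maximal runs of $L_0$-elements separated by single $L_1$-elements. Each maximal $L_0$-run, sitting inside a bracket, can be assumed to have length $\le m-1$ by the hypothesis on $L_0$ — except we must be careful that inserting an $L_1$ element and continuing may allow the run to effectively lengthen, which is where the $n$-Engel identity must be invoked to collapse things. The $n$-Engel condition $[x, y, y, \ldots, y] = 0$ ($n$ copies of $y$) will be used to bound the number of times a single generator, or a single "block," can be repeated, forcing the total length to grow only by a controlled factor at each level. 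I anticipate the bound $\frac{n^{(n-1)(m-1)+1+m}-1}{n-1}$ arises as a geometric sum $1 + n + n^2 + \cdots + n^{(n-1)(m-1)+m}$, i.e. iterating an "expansion by a factor of $n$" through $(n-1)(m-1)+m+1$ stages, where the $(n-1)(m-1)$ accounts for the interaction between the $n$-Engel depth and the $L_0$-nilpotency depth, and the extra $+m$ and $+1$ are boundary corrections from the grading.

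**Structuring the induction.** Concretely I would set up an auxiliary function $c(i)$ = the nilpotency class one can guarantee for a $\mathbb{Z}_2$-graded $n$-Engel algebra whose even part has class $\le i$, and prove the recursion $c(i) \le n \cdot c(i-1) + (\text{small correction})$ by a "sandwiching" argument: take an element $a \in L_1$; then $\operatorname{ad}(a)^2$ maps $L_0 \to L_0$ and $L_1 \to L_1$ (it is an even operator), and one studies the ideal generated by $a$, showing the relevant quotient has even part of strictly smaller class, or that the $n$-Engel identity applied to $\operatorname{ad}(a)$ kills sufficiently long products. The base case $i = 1$ (so $L_0$ abelian, $m = 2$) should reduce to a direct, small computation — plausibly that $L$ is then nilpotent of bounded class by a Higgins/Kostrikin-type argument, or even by the explicit bound $\frac{n^{(n-1)+3}-1}{n-1}$ from the formula with $m=2$. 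The main obstacle, as I see it, is getting the bookkeeping exactly right so that each inductive step multiplies the class by at most $n$ and not more: this requires a careful choice of which generator to "peel off" and a clean statement that the remaining algebra still satisfies the hypotheses with the even-class parameter reduced by one. Everything else — the grading arithmetic, the reduction to homogeneous generators, the final summation of the geometric series — I expect to be routine once that recursion is in place.
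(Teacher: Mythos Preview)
Your proposal is a plan rather than a proof, and it misses both of the two ingredients that actually drive the argument in the paper. The paper does \emph{not} induct on the nilpotency class of $L_0$, nor does it set up a recursion of the type $c(i)\le n\cdot c(i-1)$; instead it goes via \emph{solvability} and then quotes Higgins's theorem.

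Concretely, the paper observes that the $\mathbb{Z}_2$-grading forces
\[
L^{(k)}\ \le\ L_0 + [L_1,\underbrace{L_0,\ldots,L_0}_{k}]
\]
for the derived series $L=L^{(0)}\ge L^{(1)}\ge\cdots$. The key external input is Kostrikin's Proposition~4.6 (from \emph{Around Burnside}): in an $n$-Engel Lie algebra, any element $[b,a_1,\ldots,a_k]$ with $k\ge n$ is a linear combination of elements $[b,c_1,\ldots,c_{n-1}]$ where the $c_i$ are commutators in $a_1,\ldots,a_k$ of total weight $k$. Applying this with $b\in L_1$ and $a_i\in L_0$, once $k>(n-1)(m-1)$ some $c_i$ has weight $\ge m$ and hence vanishes in $L_0$. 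Thus $[L_1,L_0,\ldots,L_0]=0$ for $k>(n-1)(m-1)$, so $L^{((n-1)(m-1)+1)}\le L_0$, and $L$ is solvable of derived length at most $(n-1)(m-1)+1+m$. Higgins's theorem then converts ``solvable of derived length $r$'' into ``nilpotent of class $\le (n^r-1)/(n-1)$'', which is exactly the stated bound and explains the geometric sum you noticed: the exponent is a derived length, not an iteration count from your proposed recursion.

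The genuine gap in your outline is that neither of these two results is invoked, and the mechanisms you suggest in their place do not obviously work. Your ``maximal runs of $L_0$-elements'' idea is heading in the right direction, but without the Kostrikin rewriting lemma you have no way to shorten such a run: the hypothesis bounds products \emph{within} $L_0$, not the number of $L_0$-entries adjacent to an $L_1$-element in a mixed bracket. Likewise, the proposed induction on $i$ via $\operatorname{ad}(a)^2$ has no evident step that lowers the nilpotency class of the even part of any quotient, so the recursion $c(i)\le n\cdot c(i-1)$ is unsupported. You were right to flag this as the main obstacle; in the paper it is simply bypassed by proving solvability first and outsourcing the nilpotency bound to Higgins.
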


\begin{proof}
The $\mathbb{Z}_{2}$-grading on $L$ means that $[L_{0},L_{0}]\leq L_{0}$,
$[L_{1},L_{0}]\leq L_{1}$, $[L_{1},L_{1}]\leq L_{0}$. Note that this grading
does \emph{not} turn $L$ into a Lie superalgebra --- $L$ is an $n$-Engel Lie algebra.

Let%
\[
L=L^{(0)}\geq L^{(1)}\geq L^{(2)}\geq\ldots
\]
be the derived series of $L$, so that $L^{(1)}=[L^{(0)},L^{(0)}]=[L,L]$,
$L^{(2)}=[L^{(1)},L^{(1)}]$, and so on. Then $L^{(1)}\leq L_{0}+[L_{1},L_{0}%
]$, $L^{(2)}\leq L_{0}+[L_{1},L_{0},L_{0}]$, and in general $L^{(k)}\leq
L_{0}+[L_{1},\underset{k}{\underbrace{L_{0},\ldots,L_{0}}}]$. Consider an
element $[b,a_{1},a_{2},\ldots,a_{k}]\in\lbrack L_{1},\underset{k}%
{\underbrace{L_{0},\ldots,L_{0}}}]$, where $b\in L_{1}$ and $a_{1}%
,a_{2},\ldots,a_{k}\in L_{0}$. If $k\geq n$ then Proposition 4.6 of
\emph{Around Burnside }\cite{kostrikin86} implies that $[b,a_{1},a_{2}%
,\ldots,a_{k}]$ is a linear combination of elements $[b,c_{1},c_{2}%
,\ldots,c_{n-1}]$ where $c_{1},c_{2},\ldots,c_{n-1}$ are commutators in
$a_{1},a_{2},\ldots,a_{k}$ whose weights add up to $k$. If $k>(n-1)(m-1)$ then
at least one of the commutators $c_{i}$ must have weight at least $m$, and
must be trivial. So $[L_{1},\underset{k}{\underbrace{L_{0},\ldots,L_{0}}}]=0$
if $k>(n-1)(m-1)$. So $L^{((n-1)(m-1)+1)}\leq L_{0}$, and $L$ is solvable,
with derived length at most $(n-1)(m-1)+1+m$. Higgins's theorem
\cite{higgins54} implies that if $L$ is an $n$-Engel Lie algebra over a field
$F$ of characteristic zero or characteristic $p>n$, and if $L$ is solvable of
derived length $r$ then $L$ is nilpotent with class at most $\frac{n^{r}%
-1}{n-1}$. Let $K$ be the smallest integer which is greater than
$\frac{n^{(n-1)(m-1)+1+m}-1}{n-1}$. Then $L$ is nilpotent of class at most
$K-1$.
\end{proof}

Now let $L$ be a relatively free $n$-Engel Lie algebra with free basis
$x_{1},x_{2},\ldots,x_{K}$ over a field of characteristic zero. 
We turn $L$ into a $\mathbb{Z}_{2}$-graded Lie
algebra $L=L_{0}\oplus L_{1}$ by specifying that some of the free generators
$x_{i}$ are odd and specifying that the remainder are even. We know that $L$ is spanned
by left-normed commutators $c=[x_{i_{1}},x_{i_{2}},\ldots,x_{i_{r}}]$ where 
$r\geq1$ and $i_{1},i_{2},\ldots,i_{r}\in\{1,2,\ldots,K\}$. We can assign a
multiweight $w=(w_1,w_2,\ldots,w_K)$ to $c$ by setting $w_s=|\{i_j:1\le j\le r, \;i_j=s\}|$
for $s=1,2,\ldots ,K$.
In other words, $w_s$ is the degree of $c$ in the free generator $x_s$. For each
possible multiweight $w$ we let $L_w$ be the linear span of all left-normed
commutators with multiweight $w$. Because $L$ is a relatively free Lie algebra
over a field of characteristic zero, $L$ is the direct sum of all these multiweight
components $L_w$. Furthermore, if $u,v$ are two possible multiweights then
$[L_u,L_v] \leq L_{u+v}$, with addition of multiweights defined componentwise. We define $C_0$
to be the set of all left-normed commutators $c$ with multiweight $(w_1,w_2,\ldots ,w_K)$ 
where the sum $\sum_{1\leq s\leq K,\;x_s \,\mathrm{is}\,\mathrm{odd}}w_s$ is even and we
define $C_1$ to be the set of all left-normed commutators $c$ with multiweight 
$(w_1,w_2,\ldots ,w_K)$ where this sum is odd.
If we let $L_0$ be the linear span of $C_0$ and we let $L_1$ be the linear span
of $C_1$ then $L=L_0 \oplus L_1$ is a $\mathbb{Z}_{2}$-graded Lie algebra.

Now let
$I$ be the ideal of $L$ generated by all possible elements $[c_{1}%
,c_{2},\ldots,c_{m}]$ with $c_{i}\in C_{0}$ for $i=1,2,\ldots,m$. Then $L/I$
satisfies the hypothesis of Lemma 1, and so $[x_{1},x_{2},\ldots,x_{K}]\in I$.
For any particular $\mathbb{Z}_{2}$-grading on $L$ this implies that
$[x_{1},x_{2},\ldots,x_{K}]$ is a finite linear combination of terms of the
form $[[c_{1},c_{2},\ldots,c_{m}],a_{1},a_{2},\ldots,a_{t}]$ with $c_{i}\in
C_{0}$ for $i=1,2,\ldots,m$, and with $a_{i} \in \{x_1,x_2,\ldots,x_K\}$ for
$i=1,2,\ldots,t$ ($t \ge 0$). Since $L$ is relatively free we can assume that the 
elements $[[c_{1},c_{2},\ldots,c_{m}],a_{1},a_{2},\ldots,a_{t}]$ all have weight $K$,
and are multilinear in $x_{1},x_{2},\ldots,x_{K}$. We let $T$ be the maximum number 
of elements $[[c_{1},c_{2},\ldots,c_{m}],a_{1},a_{2},\ldots,a_{t}]$ that arise in 
any of these linear combinations as we range over all possible 
$\mathbb{Z}_{2}$-gradings on $L$.

\section{The general inductive step}

As we described in the introduction, to prove Zel'manov's
theorem we need to show that if $L/I_j$ is nilpotent for some $j$ with
$1\leq j<k$ then $L/I_{j+1}$ is nilpotent. So let $M=L/I_{j+1}$ and let $I=I_j/I_{j+1}$,
and assume that $M/I$ is nilpotent. Then $M$ is a relatively free $n$-Engel
Lie algebra over $F$, and $I$ is the sum of all abelian ideals of $M$.
Let the free generators of $M$ be $x_1,x_2,\ldots $. We also let $x_{(i,j)}$
($i,j \ge 1$) denote free generators of $M$. Since $M/I$ is nilpotent,
$[x_1,x_2,\ldots ,x_m] \in I$ for some $m$. So
\[
[x_1,x_2,\ldots ,x_m]=a_1+a_2+\ldots +a_{k-1}
\]
($k>1$) for some elements $a_1,a_2,\ldots ,a_{k-1} \in I$ all of which
lie in abelian ideals of $M$. So $M$ satisfies \emph{all} identical relations
of the form
\[
\lbrack\lbrack x_{1},x_2,\ldots,x_{m}],\ldots ,[x_{1},x_2,\ldots ,x_{m}],\ldots 
,[x_{1},x_2,\ldots,x_{m}]]=0
\]
where there are $k$ occurrences of the commutator $[x_{1},x_{2},\ldots,x_{m}]$
in each of these relations. (To simplify the notation we omit the entries in
the commutator which lie between the entries $[x_1,x_2,\ldots ,x_m]$.)
For each $i=1,2,\ldots,m$ we substitute $\sum_{j=1}%
^{k}x_{(j,i)}$ for $x_{i}$ in these relations. If we expand, and collect
up the terms which are multilinear in $\{x_{(j,i)}\}$ then we see that $M$
satisfies the identical relations%

\begin{equation}
\sum[[x_{(1\sigma_{1},1)},\ldots,x_{(1\sigma_{m},m)}],\ldots,[[x_{(2\sigma
_{1},1)},\ldots,x_{(2\sigma_{m},m)}],\ldots,[[x_{(k\sigma_{1},1)},\ldots
,x_{(k\sigma_{m},m)}]]=0,
\end{equation}
where the sum is taken over all $\sigma_1,\sigma_2,\ldots,\sigma_m \in $ Sym$(k)$.
Let $K$ be the smallest integer which is greater than
$\frac{n^{(n-1)(m-1)+1+m}-1}{n-1}$, as in Section 3, and let $T$ be as defined
in Section 3. Let $N=(Tk)^{2^{K}}$. We show that the identical relation%
\begin{equation}
\lbrack\lbrack x_{(1,1)},x_{(1,2)},\ldots,x_{(1,K)}],[x_{(2,1)},x_{(2,2)}%
,\ldots,x_{(2,K)}],\ldots,[x_{(N,1)},x_{(N,2)}\ldots,x_{(N,K)}]]=0
\end{equation}
is a consequence of the identical relations (1). This implies that $M$ is solvable,
and by Higgins's theorem \cite{higgins54} we can conclude that $M$ is nilpotent.

\bigskip
We let $F$Sym$(N)$ act on $M$, permuting the free generators
$x_{(1,1)},x_{(2,1)},\ldots,x_{(N,1)}$. If $\sigma \in \,$Sym$(N)$ then we let $x_{(i,1)}\sigma = x_{(i\sigma,1)}$ and let $x_{(i,j)}\sigma = x_{(i,j)}$ if $j \ne 1$. To establish equation (2) it is enough to 
show that
\begin{equation}
\lbrack\lbrack x_{(1,1)},x_{(1,2)},\ldots,x_{(1,K)}],[x_{(2,1)},x_{(2,2)}%
,\ldots,x_{(2,K)}],\ldots,[x_{(N,1)},x_{(N,2)}\ldots,x_{(N,K)}]]e=0
\end{equation}
for every primitive idempotent $e$ in $F$Sym$(N)$. A primitive idempotent in
$F$Sym$(N)$ will correspond to a Young tableau with first row of length at
least $N^{\frac{1}{2}}$ or first column of length at least $N^{\frac{1}{2}}$.

Suppose first that $e$ is a primitive idempotent corresponding to a Young
tableau with a row of length at least $N^{\frac{1}{2}}$, and let
\[
e=\frac{1}{m_0}\sum_{\pi\in V,\,\rho\in H}\text{sign}(\pi)\pi\rho
\]
where $V$ is the subgroup of Sym$(N)$ which permutes the entries within
each column of the tableau, and $H$ is the subgroup of Sym$(N)$ which
permutes the entries within each row of the tableau. Pick out the first $N^{\frac{1}{2}}$ 
entries in the first row of the tableau, and arrange them in ascending order 
$i_{1}<i_{2}< \ldots <i_{N^{\frac{1}{2}}}$.
Let $G$ be the subgroup of $H$ which fixes
$\{1,2,\ldots,N\}\backslash\{i_{1},i_{2},\ldots,i_{N^{\frac{1}{2}}}\}$ and $C$ be a left
transversal for $G$ in $H$, so that $H=\cup_{c\in C}cG$. Let $f=$
$\sum_{\sigma\in G}\sigma$. Also let
\[
t_i=[x_{(i,1)},x_{(i,2)}\ldots,x_{(i,K)}]
\]
for $i=1,2,\ldots,N$. Then
\[
\lbrack\lbrack x_{(1,1)},x_{(1,2)},\ldots,x_{(1,K)}],\ldots ,[x_{(N,1)},x_{(N,2)}\ldots,x_{(N,K)}]]e
\]
is a linear combination elements of the form $[t_1,t_2,\ldots,t_N]\pi cf$
with $\pi\in V$ and $c\in C$. For fixed $\pi\in V$ and $c\in C$ let
\[
\{j_1\pi c,j_2\pi c,\ldots ,j_{N^{\frac{1}{2}}}\pi c\}=\{i_{1},i_{2},\ldots,i_{N^{\frac{1}{2}}}\}
\]
with $j_1<j_2<\ldots <j_{N^{\frac{1}{2}}}$. Then $[t_1,t_2,\ldots,t_N]\pi cf$ equals
\[
\sum_{\sigma\in G}[t_1\pi c\sigma,\ldots,t_{j_1}\pi c\sigma,\ldots,t_{j_{N^{\frac{1}{2}}}}\pi c\sigma,
\ldots,t_N\pi c\sigma].
\]
Now $t_i\pi c\sigma = t_i\pi c$ if $i \notin \{j_1,j_2,\ldots,j_{N^{\frac{1}{2}}}\}$, and if
$i \in \{j_1,j_2,\ldots,j_{N^{\frac{1}{2}}}\}$ then
\[
t_i\pi c\sigma=[x_{(i\pi c\sigma,1)},x_{(i,2)},\ldots,x_{(i,K)}].
\]
As $\sigma$ runs over $G$, $(j_1\pi c\sigma,j_2\pi c\sigma,\ldots,j_{N^{\frac{1}{2}}}\pi c\sigma)$
runs over all permutations of $\{i_1,i_2,\ldots,i_{N^{\frac{1}{2}}}\}$. So to establish equation
(3) for this particular idempotent $e$ it is enough to show that 
\[
\sum_{\sigma\in G}[\ldots,t_{j_1}\pi c\sigma,\ldots,t_{j_2}\pi c\sigma,\ldots,
t_{j_{N^{\frac{1}{2}}}}\pi c\sigma,\ldots]=0
\]
for any given $\pi\in V$ and $c\in C$. Relabelling the free generators of $M$ this is equivalent 
to showing that
\begin{equation}
\sum_{\sigma\in\text{Sym}(N^{\frac{1}{2}})}[\ldots,t_1^{\sigma},\ldots ,t_2^{\sigma},\ldots 
,t_{N^{\frac{1}{2}}}^{\sigma},\ldots]=0.
\end{equation}
where $t_i^{\sigma}=[x_{(i\sigma,1)},x_{(i,2)},\ldots ,x_{(i,K)}]$ for $i=1,2,\ldots ,N^{\frac{1}{2}}$.
Here, and throughout the remainder of this section, all commutators have weight $NK$, and are
multilinear in $\{x_{(i,j)}\,|\,1\leq i\leq N,\,1\leq j\leq K\}$. Extra entries (either free generators
or commutators in free generators) need to be inserted in the ``gaps'' in (4) between the
entries $t_1^{\sigma},t_2^{\sigma},\ldots,t_{N^{\frac{1}{2}}}^{\sigma}$, and it is assumed
that these extra entries remain fixed throughout the sum in (4). Our aim is to prove 
that (4) holds true no matter how these extra entries are inserted.

Next, suppose that $e$ is a primitive idempotent corresponding to a Young
tableau with first column of length at least $N^{\frac{1}{2}}$, and let
\[
e=\frac{1}{m_0}\sum_{\pi\in V,\,\rho\in H}\text{sign}(\pi)\pi\rho
\]
where $V$ is the subgroup of Sym$(N)$ which permutes the entries within
each column of the tableau, and $H$ is the subgroup of Sym$(N)$ which
permutes the entries within each row of the tableau. Pick out the first $N^{\frac{1}{2}}$ 
entries in the first column of the tableau and arrange them in ascending order 
$i_{1}<i_{2}< \ldots <i_{N^{\frac{1}{2}}}$.
Let $G$ be the subgroup of $V$ which fixes
$\{1,2,\ldots,N\}\backslash\{i_{1},i_{2},\ldots,i_{N^{\frac{1}{2}}}\}$ and $C$ be a right
transversal for $G$ in $V$, so that $V=\cup_{c\in C}Gc$. Let $f=\sum_{\sigma \in G}$sign$(\sigma)\sigma$. 
Then
\[
\lbrack\lbrack x_{(1,1)},x_{(1,2)},\ldots,x_{(1,K)}],\ldots ,[x_{(N,1)},x_{(N,2)}\ldots,x_{(N,K)}]]e
\]
is a linear combination elements of the form $[t_1,t_2,\ldots,t_N]fc\rho$ with $c\in C$ and $\rho\in H$.
furthermore
\[
[t_1,t_2,\ldots,t_N]fc\rho
\]
\[
=\sum_{\sigma \in G}\text{sign}(\sigma )[t_1\sigma,\ldots,t_{i_1}\sigma,\ldots ,t_{i_2}\sigma,\ldots ,t_{i_{N^{\frac{1}{2}}}}\sigma,\ldots,t_N\sigma ]c\rho
\]
where $t_i\sigma = t_i$ if $i \notin \{i_1,i_2,\ldots,i_{N^{\frac{1}{2}}}\}$, and
$t_{i_j}^{\sigma}=[x_{(i_j\sigma,1)},x_{(i_j,2)}\ldots,x_{(i_j,K)}]$ for $j=1,2,\ldots ,N^{\frac{1}{2}}$.

So, as above, to show that
\[
\lbrack\lbrack x_{(1,1)},x_{(1,2)},\ldots,x_{(1,K)}],\ldots ,[x_{(N,1)},x_{(N,2)}\ldots,x_{(N,K)}]]e=0
\]
it is sufficient to show that
\begin{equation}
\sum_{\sigma\in\text{Sym}(N^{\frac{1}{2}})}\text{sign}(\sigma )[\ldots,t_1^{\sigma},\ldots ,t_2^{\sigma},\ldots ,t_{N^{\frac{1}{2}}}^{\sigma},\ldots]=0.
\end{equation}
where $t_i^{\sigma}=[x_{(i\sigma,1)},x_{(i,2)},\ldots ,x_{(i,K)}]$ for $i=1,2,\ldots ,N^{\frac{1}{2}}$.

If we denote the sum in (4) as $\sum^{+}$ and the sum in (5) as $\sum^{-}$ then we see that to
establish equation (2) it is sufficient to prove that
\begin{equation}
\sum_{\sigma\in\text{Sym}(N^{\frac{1}{2}})}^{\varepsilon}[\ldots,t_1^{\sigma},\ldots ,t_2^{\sigma},\ldots ,t_{N^{\frac{1}{2}}}^{\sigma},\ldots]=0
\end{equation}
for $\varepsilon=+$ and also for $\varepsilon=-$.

We now let $F$Sym$(N^{\frac{1}{2}})$ act on $M$, permuting the free generators
$x_{(1,2)},x_{(2,2)},\ldots,x_{(N^{\frac{1}{2}},2)}$. To establish (6) it is enough to show that
\[
\sum_{\sigma\in\text{Sym}(N^{\frac{1}{2}})}^{\varepsilon}[\ldots,t_1^{\sigma},\ldots ,t_2^{\sigma},\ldots ,t_{N^{\frac{1}{2}}}^{\sigma},\ldots]e=0
\]
for every primitive idempotent $e \in F$Sym$(N^{\frac{1}{2}})$. Any Young tableau in $F$Sym$(N^{\frac{1}{2}})$ will either
have first row of length at least $N^{\frac{1}{4}}$ or first column with length at least $N^{\frac{1}{4}}$.

First consider the case when $e$ corresponds to a Young tableau with first row of length at least $N^{\frac{1}{4}}$,
and let
\[
e=\frac{1}{m_0}\sum_{\pi\in V,\,\rho\in H}\text{sign}(\pi)\pi\rho
\]
where $V$ is the subgroup of Sym$(N^{\frac{1}{2}})$ which permutes the entries within
each column of the tableau, and $H$ is the subgroup of Sym$(N^{\frac{1}{2}})$ which
permutes the entries within each row of the tableau. Pick out the first $N^{\frac{1}{4}}$ 
entries in the first row of the tableau and arrange them in ascending order 
$i_{1}<i_{2}< \ldots <i_{N^{\frac{1}{4}}}$.
Let $G$ be the subgroup of $H$ which fixes
$\{1,2,\ldots,N^{\frac{1}{2}}\}\backslash\{i_{1},i_{2},\ldots,i_{N^{\frac{1}{4}}}\}$ and $C$ be a left
transversal for $G$ in $H$, so that $H=\cup_{c\in C}cG$. Let $f=\sum_{\tau\in G}\tau$. Then
\[
\sum_{\sigma\in\text{Sym}(N^{\frac{1}{2}})}^{\varepsilon}[\ldots,t_1^{\sigma},\ldots ,t_2^{\sigma},\ldots ,t_{N^{\frac{1}{2}}}^{\sigma}\ldots]e
\]
is a linear combination of terms of the form
\[
\sum_{\sigma\in\text{Sym}(N^{\frac{1}{2}})}^{\varepsilon}[\ldots,t_1^{\sigma},\ldots ,t_2^{\sigma},\ldots ,t_{N^{\frac{1}{2}}}^{\sigma},\ldots]\pi cf
\]
with $\pi\in V$ and $c\in C$. For fixed $\pi\in V$ and $c\in C$ let
\[
\{j_1\pi c,j_2\pi c,\ldots ,j_{N^{\frac{1}{4}}}\pi c\}=\{i_{1},i_{2},\ldots,i_{N^{\frac{1}{4}}}\}
\]
with $j_1<j_2<\ldots <j_{N^{\frac{1}{4}}}$. Then
\[
\sum_{\sigma\in\text{Sym}(N^{\frac{1}{2}})}^{\varepsilon}[\ldots,t_1^{\sigma},\ldots ,t_2^{\sigma},\ldots ,t_{N^{\frac{1}{2}}}^{\sigma},\ldots]\pi cf
\]
\[
=\sum_{\sigma\in\text{Sym}(N^{\frac{1}{2}})}^{\varepsilon}\sum_{\tau \in G}[\ldots,t_1^{\sigma},\ldots ,t_{j_1}^{\sigma},\ldots ,t_{j_{N^{\frac{1}{4}}}}^{\sigma},
\ldots ,t_{N^{\frac{1}{2}}}^{\sigma},\ldots]\pi c\tau
\]
\[
=\sum_{\sigma\in\text{Sym}(N^{\frac{1}{2}})}^{\varepsilon}\sum_{\tau \in G}[\ldots,t_1^{\sigma}\pi c,\ldots ,t_{j_1}^{\sigma}\pi c\tau,\ldots ,
t_{j_{N^{\frac{1}{4}}}}^{\sigma}\pi c\tau,\ldots ,t_{N^{\frac{1}{2}}}^{\sigma}\pi c,\ldots]
\]
since if $\tau \in G$ then $\tau$ fixes $t_j^{\sigma}\pi c$ unless $j \in \{j_1,j_2,\ldots ,j_{N^{\frac{1}{4}}}\}$.

Now if $1\leq r\leq N^{\frac{1}{4}}$ then
\[
t_{j_r}^{\sigma}\pi c\tau=[x_{(j_r\sigma,1)},x_{(j_r\pi c\tau,2)},x_{(j_r,3)}\ldots ,x_{(j_r,K)}].
\]
As $\tau$ ranges over $G$, $(j_1\pi c\tau,j_2\pi c\tau,\ldots ,j_{N^{\frac{1}{4}}}\pi c\tau)$ ranges over all
possible permutations of $\{i_1,i_2,\ldots ,i_{N^{\frac{1}{4}}}\}$. And as $\sigma$ ranges over Sym$(N^{\frac{1}{2}})$,
$(j_1\sigma,j_2\sigma,\ldots ,j_{N^{\frac{1}{4}}}\sigma)$ ranges over all possible permutations of subsets $S$
where $S$ ranges over all possible $N^{\frac{1}{4}}$ element subsets of $\{1,2,\ldots ,N^{\frac{1}{2}}\}$.
Fix on one particular $N^{\frac{1}{4}}$ element subset $S$ of $\{1,2,\ldots ,N^{\frac{1}{2}}\}$,
and pick $\sigma_0$ such that $\{j_1\sigma_0,j_2\sigma_0,\ldots ,j_{N^{\frac{1}{4}}}\sigma_0\}=S$.
Let $A$ be the group of all permutations of $S$ and let $B$ be the group of all permutations of
$\{1,2,\ldots ,N^{\frac{1}{2}}\}\backslash S$. Then any permutation $\sigma \in \,$Sym$(N^{\frac{1}{2}})$ 
which the property that $\{j_1\sigma,j_2\sigma,\ldots ,j_{N^{\frac{1}{4}}}\sigma\}=S$ can be
written uniquely in the form $\sigma =\sigma_0ab$ with $a \in A$ and $b \in B$.
So if we pick out the terms in
\[
\sum_{\sigma\in\text{Sym}(N^{\frac{1}{2}})}^{\varepsilon}\sum_{\tau \in G}[\ldots,t_1^{\sigma}\pi c,\ldots ,t_{j_1}^{\sigma}\pi c\tau,\ldots ,
t_{j_{N^{\frac{1}{4}}}}^{\sigma}\pi c\tau,\ldots ,t_{N^{\frac{1}{2}}}^{\sigma}\pi c,\ldots]
\]
where $\{j_1\sigma,j_2\sigma,\ldots ,j_{N^{\frac{1}{4}}}\sigma\}=S$ then we obtain
\[
\pm \sum_{a \in A}^{\varepsilon}\sum_{b \in B}^{\varepsilon}\sum_{\tau \in G}[\ldots,t_1^{\sigma_0b}\pi c,\ldots ,t_{j_1}^{\sigma_0a}\pi c\tau,\ldots ,
t_{j_{N^{\frac{1}{4}}}}^{\sigma_0a}\pi c\tau,\ldots ,t_{N^{\frac{1}{2}}}^{\sigma_0b}\pi c,\ldots].
\]
This is a sum of $|B|$ terms of the form
\[
\pm \sum_{a \in A}^{\varepsilon}\sum_{\tau \in G}[\ldots,t_1^{\sigma_0b}\pi c,\ldots ,t_{j_1}^{\sigma_0a}\pi c\tau,\ldots ,
t_{j_{N^{\frac{1}{4}}}}^{\sigma_0a}\pi c\tau,\ldots ,t_{N^{\frac{1}{2}}}^{\sigma_0b}\pi c,\ldots],
\]
one for each $b \in B$. To show that
\[
\sum_{\sigma\in\text{Sym}(N^{\frac{1}{2}})}^{\varepsilon}[\ldots,t_1^{\sigma},\ldots ,t_2^{\sigma},\ldots ,t_{N^{\frac{1}{2}}}^{\sigma},\ldots]e=0
\]
it is enough to show that each of these individual
sums is zero. Simplifying the notation, this is equivalent to showing that
\[
\sum_{\sigma\in\text{Sym}(N^{\frac{1}{4}})}^{\varepsilon}\sum_{\tau \in\text{Sym}(N^{\frac{1}{4}})}
[\ldots,t_1^{(\sigma,\tau)},\ldots,t_2^{(\sigma,\tau)},\ldots,t_{N^{\frac{1}{4}}}^{(\sigma,\tau)},\ldots]=0
\]
where $t_i^{(\sigma,\tau)}=[x_{(i\sigma,1)},x_{(i\tau,2)},x_{(i,3)},\ldots,x_{(i,K)}]$ for $i=1,2,\ldots,N^{\frac{1}{4}}$.

Similarly, if $e$ is an idempotent corresponding to a Young tableau with first column of length at least $N^{\frac{1}{4}}$,
then to show that
\[
\sum_{\sigma\in\text{Sym}(N^{\frac{1}{2}})}^{\varepsilon}[\ldots,t_1^{\sigma},\ldots ,t_2^{\sigma},\ldots ,t_{N^{\frac{1}{2}}}^{\sigma},\ldots]e=0
\]
it is sufficient to show that
\[
\sum_{\sigma\in\text{Sym}(N^{\frac{1}{4}})}^{\varepsilon}\sum_{\tau \in\text{Sym}(N^{\frac{1}{4}})}^-
[\ldots,t_1^{(\sigma,\tau)},\ldots,t_2^{(\sigma,\tau)},\ldots,t_{N^{\frac{1}{4}}}^{(\sigma,\tau)},\ldots]=0
\]
So to establish equation (6) it is enough to show that
\[
\sum_{\sigma\in\text{Sym}(N^{\frac{1}{4}})}^{\varepsilon}\sum_{\tau \in\text{Sym}(N^{\frac{1}{4}})}^{\eta}
[\ldots,t_1^{(\sigma,\tau)},\ldots,t_2^{(\sigma,\tau)},\ldots,t_{N^{\frac{1}{4}}}^{(\sigma,\tau)},\ldots]=0
\]
for $\eta = +$ and for $\eta = -$.

We next let $F$Sym$(N^{\frac{1}{4}})$ act
on $M$, permuting the free generators $x_{(1,3)},x_{(2,3)},\ldots,x_{(N^{\frac{1}{4}},3)}$, 
and so on. Continuing in this manner for $K$ steps
we eventually see that if we let $R=Tk$, then it is enough to prove that for
every choice of $\varepsilon_{1},\varepsilon_{2},\ldots \varepsilon_{K}$ equal
to $+$ or equal to $-$,
\[
\sum_{\sigma_{1}\in\text{Sym}(R)}^{\varepsilon_{1}}\sum_{\sigma_{2}%
\in\text{Sym}(R)}^{\varepsilon_{2}}\ldots\sum_{\sigma_{K}\in\text{Sym}%
(R)}^{\varepsilon_{K}}[\ldots,t_{1}^{(\sigma_1,\ldots,\sigma_K)},\ldots,t_{2}^{(\sigma_1,\ldots,\sigma_K)}
,\ldots,t_{R}^{(\sigma_1,\ldots,\sigma_K)},\ldots]=0
\]
where
\[
t_{i}^{(\sigma_1,\ldots,\sigma_K)}=[x_{(i\sigma_{1},1)},x_{(i\sigma_{2},2)},\ldots,x_{(i\sigma_{K},K)}]
\]
for $i=1,2,\ldots,R$. We alter the notation slightly and rewrite
the left hand side of this equation as%
\begin{equation}
\sum_{\sigma_{1}\in S_{1}}^{\varepsilon_{1}}\sum_{\sigma_{2}\in S_{2}%
}^{\varepsilon_{2}}\ldots\sum_{\sigma_{K}\in S_{K}}^{\varepsilon_{K}}%
[\ldots,t_{1},\ldots,t_{2},\ldots,t_{R},\ldots]\sigma_{1}\sigma_{2}\ldots\sigma_{K}%
\end{equation}
where $S_{1}$ is a copy of Sym$(R)$ which permutes the free generators
$x_{(1,1)},x_{(2,1)},\ldots,x_{(R,1)}$ (so $S_{1}$ permutes generators rather
than indices), where $S_{2}$ is a copy of Sym$(R)$ which permutes the free
generators $x_{(1,2)},x_{(2,2)},\ldots,x_{(R,2)}$, and so on, and where
$t_{i}=[x_{(i,1)},x_{(i,2)},\ldots,x_{(i,K)}]$ for $i=1,2,\ldots,R$.

We now fix a choice of $+$ or $-$ for each of $\varepsilon_{1},\varepsilon
_{2},\ldots,\varepsilon_{K}$ and apply Lemma 1 from Section 3. We let $L$ be
the Lie subring of $M$ generated by the free generators $x_{1},x_{2}%
,\ldots,x_{K}$. We turn $L$ into a $\mathbb{Z}_{2}$-graded Lie algebra
$L=L_{0}\oplus L_{1}$ letting $x_{i}\in L_{0}$ if $\varepsilon_{i}=+$, and
letting $x_{i}\in L_{1}$ if $\varepsilon_{i}=-$. We let $C$ be the set of all
possible left-normed commutators $[x_{i_{1}},x_{i_{2}},\ldots,x_{i_{r}}]$
where $r\geq1$ and $i_{1},i_{2},\ldots,i_{r}\in\{1,2,\ldots,K\}$. Then
$C=C_{0}\cup C_{1}$, where $C_{0}\subset L_{0}$ and $C_{1}\subset L_{1}$. Let
$J$ be the ideal of $L$ generated by all possible elements $[c_{1}%
,c_{2},\ldots,c_{m}]$ with $c_{i}\in C_{0}$ for $i=1,2,\ldots,m$. Then $L/J$
satisfies the hypothesis of Lemma 1, and so $[x_{1},x_{2},\ldots,x_{K}]\in J$.
This implies that $[x_{1},x_{2},\ldots,x_{K}]$ is a finite linear combination
$\sum_{r=1}^{t}\alpha_{r}u_{r}$ ($\alpha_{r}\in F$) of multilinear terms
$u_{r}$ of weight $K$ of the form $[[c_{1},c_{2},\ldots,c_{m}],a_{1},a_{2},\ldots,a_{q}]$
with $c_{i}\in C_{0}$ for $i=1,2,\ldots,m$ and with $a_{1},a_{2},\ldots
,a_{q}\in \{x_1,x_2,\ldots,x_K\}$ $(q\geq0)$. We chose $T$ in Section 3 so $t\leq T$. For each
$i=1,2,\ldots,R$ we let $\theta_{i}$ be the endomorphism of $M$ mapping
$x_{j}$ to $x_{(i,j)}$ for $j=1,2,\ldots,K$, so that%
\[
t_{i}=[x_{(i,1)},x_{(i,2)},\ldots,x_{(i,K)}]=[x_{1},x_{2},\ldots,x_{K}%
]\theta_{i}=\sum_{r=1}^{t}\alpha_{r}u_{r}\theta_{i}.
\]
for $i=1,2,\ldots,R$. We substitute this sum for each $t_{i}$ in (7), expand,
and obtain a linear combination of expressions%
\[
\sum_{\sigma_{1}\in S_{1}}^{\varepsilon_{1}}\sum_{\sigma_{2}\in S_{2}%
}^{\varepsilon_{2}}\ldots\sum_{\sigma_{K}\in S_{K}}^{\varepsilon_{K}}%
[\ldots,u_{r_{1}}\theta_{1},\ldots,u_{r_{2}}\theta_{2},\ldots,u_{r_{R}}\theta
_{R},\ldots]\sigma_{1}\sigma_{2}\ldots\sigma_{K}%
\]
over all possible choices of $1\leq r_{1},r_{2},\ldots,r_{R}\leq t$. Since
$R=Tk\geq tk$, for any such choice of $r_{1},r_{2},\ldots,r_{R}$ there must be
some index, $r$ say, which appears at least $k$ times in the sequence. Suppose
that $r_{i}=r$ for $i=i_{1},i_{2},\ldots,i_{k}$. Then%
\begin{align*}
&  \sum_{\sigma_{1}\in S_{1}}^{\varepsilon_{1}}\sum_{\sigma_{2}\in S_{2}%
}^{\varepsilon_{2}}\ldots\sum_{\sigma_{K}\in S_{K}}^{\varepsilon_{K}}%
[\ldots,u_{r_{1}}\theta_{1},\ldots,u_{r_{2}}\theta_{2},\ldots,u_{r_{R}}\theta
_{R}\ldots]\sigma_{1}\sigma_{2}\ldots\sigma_{K}\\
&  =\sum_{\sigma_{1}\in S_{1}}^{\varepsilon_{1}}\sum_{\sigma_{2}\in S_{2}%
}^{\varepsilon_{2}}\ldots\sum_{\sigma_{K}\in S_{K}}^{\varepsilon_{K}}%
[\ldots,u_{r}\theta_{i_{1}},\ldots,u_{r}\theta_{i_{2}},\ldots,u_{r}%
\theta_{i_{k}},\ldots]\sigma_{1}\sigma_{2}\ldots\sigma_{K}.
\end{align*}
If $u_{r}=[[c_{1},c_{2},\ldots,c_{m}],a_{1},a_{2},\ldots,a_{q}]$ then
\[
[\ldots,u_{r}\theta_{i_{1}},\ldots,u_{r}\theta_{i_{2}},\ldots,u_{r}\theta_{i_{k}},\ldots]
\]
is a linear combination of multilinear commutators of the form
\[
[\ldots,[c_{1},c_{2},\ldots,c_{m}]\theta_{i_{1}},\ldots,[c_{1},c_{2},\ldots,c_{m}]\theta_{i_{2}},
\ldots,[c_{1},c_{2},\ldots,c_{m}]\theta_{i_{k}},\ldots].
\]
so to show that (7) equals zero it is sufficient to show that
\begin{equation}
\sum_{\sigma_{1}\in S_{1}}^{\varepsilon_{1}}\sum_{\sigma_{2}\in S_{2}%
}^{\varepsilon_{2}}\ldots\sum_{\sigma_{K}\in S_{K}}^{\varepsilon_{K}}%
[\ldots,[c_{1},c_{2},\ldots,c_{m}]\theta_{i_{1}},\ldots,[c_{1},c_{2}%
,\ldots,c_{m}]\theta_{i_{k}},\ldots]\sigma_{1}\sigma_{2}\ldots\sigma_{K}=0.
\end{equation}
We show that equation (1) implies equation (8).
(This will complete our proof of Zel'manov's theorem.)

We let $\sigma_{i}\in S_{i}$ for $i=1,2,\ldots,K$, and we let $\sigma
=\sigma_{1}\sigma_{2}\ldots\sigma_{K}$, and we consider the single term%
\[
\pm\lbrack\ldots,[c_{1},c_{2},\ldots,c_{m}]\theta_{i_{1}},\ldots,[c_{1}%
,c_{2},\ldots,c_{m}]\theta_{i_{2}},\ldots,[c_{1},c_{2},\ldots,c_{m}%
]\theta_{i_{k}},\ldots]\sigma
\]
from the sum in (8). Pick $i,j \in \{i_1,i_2,\ldots,i_k\}$ ($i<j$), and consider the action of
$\sigma$ on $[c_{1},c_{2},\ldots,c_{m}]\theta_{i}$ and $[c_{1},c_{2}%
,\ldots,c_{m}]\theta_{j}$.%
\[
\lbrack c_{1},c_{2},\ldots,c_{m}]\theta_{i}\sigma=[c_{1}\theta_{i}\sigma
,c_{2}\theta_{i}\sigma,\ldots,c_{m}\theta_{i}\sigma].
\]
Suppose that $c_{1}=[x_{k_{1}},x_{k_{2}},\ldots,x_{k_{q}}]$ $(q\geq1)$. Then%
\[
c_{1}\theta_{i}\sigma=[x_{(i,k_{1})},x_{(i,k_{2})},\ldots,x_{(i,k_{q})}%
]\sigma=[x_{(i,k_{1})}\sigma_{k_{1}},x_{(i,k_{2})}\sigma_{k_{2}}%
,\ldots,x_{(i,k_{q})}\sigma_{k_{q}}].
\]
(We are using the fact that $\sigma_{s}$ fixes $x_{(i,j)}$ unless $s=j$.)
Similarly
\[
\lbrack c_{1},c_{2},\ldots,c_{m}]\theta_{j}\sigma=[c_{1}\theta_{j}\sigma
,c_{2}\theta_{j}\sigma,\ldots,c_{m}\theta_{j}\sigma],
\]
and%
\[
c_{1}\theta_{j}\sigma=[x_{(j,k_{1})},x_{(j,k_{2})},\ldots,x_{(j,k_{q})}%
]\sigma=[x_{(j,k_{1})}\sigma_{k_{1}},x_{(j,k_{2})}\sigma_{k_{2}}%
,\ldots,x_{(j,k_{q})}\sigma_{k_{q}}].
\]
Now let $\tau_{1}$ be the transposition in $S_{k_{1}}$ which swaps
$x_{(i,k_{1})}\sigma_{k_{1}}$ and $x_{(j,k_{1})}\sigma_{k_{1}}$, let $\tau
_{2}$ be the transposition in $S_{k_{2}}$which swaps $x_{(i,k_{2})}%
\sigma_{k_{2}}$ and $x_{(j,k_{2})}\sigma_{k_{2}}$, and so on. Note that the
sign attached to $\tau_{1}$ in equation (8) is $\varepsilon_{k_1}$, and that the sign attached to
$\tau_{2}$ is $\varepsilon_{k_2}$, and so on.
Let $\tau
=\tau_{1}\tau_{2}\ldots\tau_{q}$. Note that since $c_{1}\in L_{0}\,$, the sign
attached to $\tau$ is $+$. So%
\[
\lbrack\ldots,[c_{1},c_{2},\ldots,c_{m}]\theta_{i_{1}},\ldots,[c_{1}%
,c_{2},\ldots,c_{m}]\theta_{i_{2}},\ldots,[c_{1},c_{2},\ldots,c_{m}%
]\theta_{i_{k}},\ldots]\sigma
\]
and%
\[
\lbrack\ldots,[c_{1},c_{2},\ldots,c_{m}]\theta_{i_{1}},\ldots,[c_{1}%
,c_{2},\ldots,c_{m}]\theta_{i_{2}},\ldots,[c_{1},c_{2},\ldots,c_{m}%
]\theta_{i_{k}},\ldots]\sigma\tau
\]
are two terms from the sum in (8) with the same sign. Picking out the action
of $\sigma$ and $\sigma\tau$ on $[c_{1},c_{2},\ldots,c_{m}]\theta_{i}$ and
$[c_{1},c_{2},\ldots,c_{m}]\theta_{j}$ we can write these two expressions as%
\[
\lbrack\ldots,[c_{1}\theta_{i}\sigma,c_{2}\theta_{i}\sigma,\ldots,c_{m}%
\theta_{i}\sigma],\ldots,[c_{1}\theta_{j}\sigma,c_{2}\theta_{j}\sigma
,\ldots,c_{m}\theta_{j}\sigma],\ldots]
\]
and%
\[
\lbrack\ldots,[c_{1}\theta_{i}\sigma\tau,c_{2}\theta_{i}\sigma,\ldots
,c_{m}\theta_{i}\sigma],\ldots,[c_{1}\theta_{j}\sigma\tau,c_{2}\theta
_{j}\sigma,\ldots,c_{m}\theta_{j}\sigma],\ldots]
\]
where corresponding unspecified entries are the same in these two commutators.
Our choice of $\tau$ implies that $c_{1}\theta_{i}\sigma\tau=c_{1}\theta
_{j}\sigma$ and $c_{1}\theta_{j}\sigma\tau=c_{1}\theta_{i}\sigma$. So $\tau$ swaps
the two entries $c_{1}\theta_{i}\sigma$ and $c_{1}\theta_{j}\sigma$, and leaves 
everything else fixed.

Now let $\sigma_{i}$ range over all of $S_{i}$ for all of $i=1,2,\ldots,K$ and
write (8) as%
\[
\sum_{\sigma}\pm\lbrack\ldots,[c_{1}\theta_{i_{1}}\sigma,c_{2}\theta_{i_{1}%
}\sigma,\ldots,c_{m}\theta_{i_{1}}\sigma],\ldots,[c_{1}\theta_{i_{k}}%
\sigma,c_{2}\theta_{i_{k}}\sigma,\ldots,c_{m}\theta_{i_{k}}\sigma],\ldots]
\]
where the unspecified entries are also acted on by $\sigma$. Then we have
shown that this sum is symmetric in the entries $c_{1}\theta_{i_{1}}\sigma$,
$c_{1}\theta_{i_{2}}\sigma$, \ldots, $c_{1}\theta_{i_{k}}\sigma$. Similarly we
see that this expression is symmetric in $c_{j}\theta_{i_{1}}\sigma$,
$c_{j}\theta_{i_{2}}\sigma$, \ldots, $c_{j}\theta_{i_{k}}\sigma$ for all
$j=1,2,\ldots,m$. So equation (1) implies that this sum is zero.

\end{document}